\DeclareMathOperator*{\argmin}{argmin}
\DeclareMathOperator*{\diag}{diag}
\newtheorem{theorem}{Theorem}
\newtheorem{lemma}{Lemma}
\title{A Generalized Fundamental Matrix for Computing Fundamental Quantities of Markov Systems}
\author{Li Xia\thanks{L. Xia is with the Center for Intelligent and Networked Systems (CFINS), Department of Automation, TNList, Tsinghua University, Beijing 100084, China (Email:
xial@tsinghua.edu.cn)}, \ Peter W. Glynn\thanks{P. W. Glynn is with
the Department of Management Science and Engineering, Stanford
University, Stanford, CA 94305, USA (Email: glynn@stanford.edu)}}
\begin{document}
\maketitle
\begin{abstract}
As is well known, the fundamental matrix $(\bm I - \bm P + \bm e \bm
\pi)^{-1}$ plays an important role in the performance analysis of
Markov systems, where $\bm P$ is the transition probability matrix,
$\bm e$ is the column vector of ones, and $\bm \pi$ is the row
vector of the steady state distribution. It is used to compute the
performance potential (relative value function) of Markov decision
processes under the average criterion, such as $\bm g=(\bm I - \bm P
+ \bm e \bm \pi)^{-1} \bm f$ where $\bm g$ is the column vector of
performance potentials and $\bm f$ is the column vector of reward
functions. However, we need to pre-compute $\bm \pi$ before we can
compute $(\bm I - \bm P + \bm e \bm \pi)^{-1}$. In this paper, we
derive a generalization version of the fundamental matrix as $(\bm I
- \bm P + \bm e \bm r)^{-1}$, where $\bm r$ can be any given row
vector satisfying $\bm r \bm e \neq 0$. With this generalized
fundamental matrix, we can compute $\bm g=(\bm I - \bm P + \bm e \bm
r)^{-1} \bm f$. The steady state distribution is computed as $\bm
\pi = \bm r(\bm I - \bm P + \bm e \bm r)^{-1}$. The Q-factors at
every state-action pair can also be computed in a similar way. These
formulas may give some insights on further understanding how to
efficiently compute or estimate the values of $\bm g$, $\bm \pi$,
and Q-factors in Markov systems, which are fundamental quantities
for the performance optimization of Markov systems.
\end{abstract}
\textbf{Keywords}: Fundamental matrix, performance potential, steady
state distribution, Q-factors, stochastic matrix

\section{Introduction}\label{section_intro}
Markov decision processes (MDPs) are widely adopted to model the
dynamic decision problem in stochastic systems
\cite{Bertsekas12,Puterman94}. The fundamental matrix $(\bm I - \bm
P + \bm e \bm \pi)^{-1}$ plays a key role in the performance
optimization of MDPs. With the fundamental matrix, we can further
study the properties of Markov systems. For example, we can use it
to compute the performance potential (or called relative value
function) of MDPs under the average criterion, i.e., $\bm g=(\bm I -
\bm P + \bm e \bm \pi)^{-1} \bm f$.

The concept of the fundamental matrix was first proposed by J. G.
Kemeny in his book ``\emph{Finite Markov Chains}" coauthored with L.
J. Snell in 1960 \cite{Kemeny60}. In this book, the fundamental
matrix is defined as $\bm Z^*:=(\bm I - \bm P + \bm e \bm
\pi)^{-1}$. Many analysis, such as the mean passage time and the
variance of passage time, can be conducted by using this fundamental
matrix. The fundamental matrix also plays a key role in the
performance sensitivity analysis of Markov systems. The original
work about the sensitivity analysis of the steady state distribution
and the fundamental matrix with respect to the stochastic matrix of
Markov systems can be referred backward to P. Schweitzer's work in
1968 \cite{Schweitzer68}. P. Schweitzer presented a perturbation
formalism that shows how the stationary distribution and the
fundamental matrix of a Markov chain containing a single irreducible
set of states change as the transition probabilities vary. The
sensitivity information can be represented in a series of Rayleigh
perturbation expansions of the fundamental matrix and other
parameters. This is the main target of the perturbation analysis of
Markov chains at the early stage.

E. Seneta and C. D. Meyer did a lot of work \cite{Meyer94,Seneta93}
to study the relation between the eigenvalues of stochastic matrix
$\bm P$ and the condition number ($\max\{|a^\#_{i,j}|\}$) of group
generalized inverse ($\bm A^\#=(\bm I - \bm P + \bm e \bm \pi)^{-1}
- \bm e \bm \pi$) of matrix $\bm A = \bm I - \bm P$, where
$a^\#_{i,j}$ is the element of matrix $\bm A^\#$. Some inequalities
are derived to quantify the sensitivity of the steady state
distribution when $\bm P$ is perturbed to $\bm P'$. Therefore, the
sensitivity of the steady state distribution can be analyzed through
studying the eigenvalues of stochastic matrix $\bm P$. This is the
main idea of the perturbation analysis of Markov chains at that
period. More than the sensitivity analysis of the steady state
distribution, X. R. Cao proposed the sensitivity-based optimization
theory that focuses on the sensitivity analysis of the system
performance with respect to the perturbed transition probabilities
or policies \cite{Cao97,Cao07}. This approach works well for
different system settings, including the average or discounted
criterion, the unichain or multichain, Markov or semi-Markov
systems.

Performance potential $\bm g$ is a fundamental quantity in MDPs. We
have to compute or estimate its value before we conduct the policy
iteration or sensitivity-based optimization. For an MDP under the
discounted criterion, we can directly compute it as $\bm g= (\bm I -
\gamma \bm P)^{-1}\bm f$ since the matrix $(\bm I - \gamma \bm P)$
is invertible \cite{Puterman94}, where $\gamma$ is the discount
factor and $0<\gamma<1$. For an MDP under the average criterion, the
fundamental matrix is used to compute the value of performance
potentials and it has the form $\bm g=(\bm I - \bm P + \bm e \bm
\pi)^{-1} \bm f$. Since the fundamental matrix can be decomposed as
$(\bm I - \bm P + \bm e \bm \pi)^{-1} = \sum_{n=0}^{\infty}(\bm P^n
- \bm e \bm \pi)$, we can rewrite the definition of the performance
potential as $\bm g= \sum_{n=0}^{\infty}(\bm P^n \bm f - \eta \bm e
)$, where $\eta = \bm \pi \bm f$ is the long-run average
performance. That is, we can derive the following sample path
version of the performance potential as $g(i) = E\{
\sum_{n=0}^{N}(f(X_n) - \hat{\eta})|X_0=i \}$, where $X_n$ is the
system state at time $n$, $\hat{\eta}$ is the estimated value of the
long-run average performance, $i \in \mathcal S$ is a state of the
state space $\mathcal S$, and $N$ is a proper integer that is to
control the estimation variance \cite{Cao07}. However, we see that
when we compute $\bm g=(\bm I - \bm P + \bm e \bm \pi)^{-1} \bm f$,
we have to pre-compute the value of $\bm \pi$ first. This issue was
also pointed out by J. G. Kemeny when he studied the computation of
the fundamental matrix $\bm Z^*=(\bm I - \bm P + \bm e \bm
\pi)^{-1}$. He wrote ``It also suffers from the difficulty that one
must compute $\alpha$ (solving $n$ equations) before one can compute
$\bm Z^*$" \cite{Kemeny81}, where $\alpha$ is the steady state
distribution $\bm \pi$ in our paper.

In this paper, we study another form of the fundamental matrix as
$\bm Z_r := (\bm I - \bm P + \bm e \bm r)^{-1}$, where $\bm r$ is
any row vector satisfying $\bm r \bm e \neq 0$. Using this
generalized fundamental matrix, we can compute the value of
performance potentials as $\bm g = (\bm I - \bm P + \bm e \bm
r)^{-1} \bm f$, where all the parameters are known and no
pre-computation is required. The traditional approach $\bm g = (\bm
I - \bm P + \bm e \bm \pi)^{-1} \bm f$ is a special case where we
choose $\bm r = \bm \pi$. We can also choose $\bm r$ as other
vectors. This formula can also be used to compute the value of $\bm
\pi$ as $\bm \pi = \bm r (\bm I - \bm P + \bm e \bm r)^{-1}$, where
the kernel computation remains the same as the generalized
fundamental matrix $(\bm I - \bm P + \bm e \bm r)^{-1}$.

There exist two works about the generalization of the fundamental
matrix in the literature. After proposing the concept of the
fundamental matrix, J. G. Kemeny further studied a generalized form
\cite{Kemeny81}. A special case for ergodic Markov chains is that he
defined $\bm Z_{\beta} := (\bm I - \bm P + \bm e \bm \beta)^{-1}$,
$\bm \beta \bm e = 1$, where $\bm \beta$ is a row vector. In
Kemeny's work, it is shown that the above matrix can be used to
compute the stationary distribution as $\bm \pi = \bm \beta \bm
Z_{\beta}$. Similar result was later reported in J. J. Hunter's book
\cite{Hunter83}, which has the form $\bm \pi = \bm u (\bm I - \bm P
+ \bm e \bm u)^{-1}$, $\bm u \bm e \neq 0$, where $\bm u$ is a row
vector. After that, J. J. Hunter further gave a thorough study on
varied forms of general inverse of Markovian kernel $(I-P)$ in his
recent works \cite{Hunter07,Hunter14}. One form of the general
inverse is written as $(\bm I - \bm P + \bm t \bm u)^{-1}$ with
condition $\bm \pi \bm t \neq 0$ and $\bm u \bm e \neq 0$, where
$\bm t$ is a column vector. Obviously, $(\bm I - \bm P + \bm t \bm
u)^{-1}$ is more general than the above results. Although these
works have a similar result to ours, they focus on the computation
of the steady state distribution or the mean first passage time.
There is no study on the computation of performance potentials or
relative value functions. Compared with the widely-adopted form $\bm
g = (\bm I - \bm P + \bm e \bm \pi)^{-1} \bm f$, our new formula
$\bm g = (\bm I - \bm P + \bm e \bm r)^{-1} \bm f$ does not require
the pre-computation of $\bm \pi$. It may shed some light on how to
efficiently compute or estimate the value of $\bm g$, which is an
essential procedure for the policy iteration in MDPs. Moreover, we
also study the generalization of the fundamental matrix not only for
a discrete time Markov chain, but also for a continuous time Markov
process. We further extend the similar idea to the representation
and computation of Q-factors that are fundamental quantities for
reinforcement learning and artificial intelligence \cite{Silver16,
Sutton98}.

\section{Main Results}\label{section_result}

\subsection{Generalized Fundamental Matrix}\label{subsection_GFM}
We focus on the discussion of a Markov chain with finite states. The
state space is denoted as $\mathcal S:=\{1,2,\cdots,S\}$. The
transition probability matrix is denoted as $\bm P$ and its element
is $P(i,j)$ indicating the probability of which the system transits
from the current state $i$ to the next state $j$, where $i,j\in
\mathcal S$. The steady state distribution of this Markov chain is
denoted as an $S$-dimensional row vector $\bm \pi$ and its element
$\pi(i)$ is the probability of the system staying at state $i$, $i
\in \mathcal S$. Obviously, we have
\begin{equation}
\bm P \bm e = \bm e, \qquad \bm \pi \bm P = \bm \pi, \qquad \bm \pi
\bm e = 1,
\end{equation}
where $\bm e$ is an $S$-dimensional column vector with all elements
1.

First, we give the following lemma about shifting the eigenvalues of
a general matrix.
\begin{lemma}\label{lemma1}
Suppose that $\bm A$ is a square matrix for which $\lambda$ is an
eigenvalue having multiplicity 1, and the associated column
eigenvector is $\bm v$. Let $\lambda_i$ be the other eigenvalues of
$\bm A$ and $\bm \phi_i$ be the associated row eigenvectors. Then,
for any row vector $\bm r$, $\lambda + \bm r\bm v$ is an eigenvalue
of $\bm A + \bm v\bm r$ and the associated column eigenvector is
$\bm v$; other eigenvalues of $\bm A + \bm v\bm r$ are given by
$\lambda_i$ with the associated row eigenvectors $\bm \phi_i$.
\end{lemma}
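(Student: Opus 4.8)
The plan is to verify both assertions by direct substitution, the only genuinely nontrivial ingredient being the orthogonality between the column eigenvector $\bm v$ of $\lambda$ and the row eigenvectors $\bm \phi_i$ of the remaining eigenvalues. Everything follows once that orthogonality is in hand.

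First I would dispose of the new eigenpair, which costs almost nothing. Since $\bm r \bm v$ is a scalar, a direct computation gives
\begin{equation*}
(\bm A + \bm v \bm r)\bm v = \bm A \bm v + \bm v(\bm r \bm v) = \lambda \bm v + (\bm r \bm v)\bm v = (\lambda + \bm r \bm v)\bm v,
\end{equation*}
so $\bm v$ is a column eigenvector of $\bm A + \bm v \bm r$ with eigenvalue $\lambda + \bm r \bm v$. Note this part uses neither the simplicity of $\lambda$ nor any information about the other eigenvectors.

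For the second assertion I must show $\bm \phi_i(\bm A + \bm v \bm r) = \lambda_i \bm \phi_i$. Expanding, $\bm \phi_i(\bm A + \bm v \bm r) = \lambda_i \bm \phi_i + (\bm \phi_i \bm v)\bm r$, so it suffices to prove that the scalar $\bm \phi_i \bm v$ vanishes. The idea is the standard left–right eigenvector argument: I would evaluate $\bm \phi_i \bm A \bm v$ in two ways, acting $\bm A$ to the left (giving $\lambda_i\,\bm \phi_i \bm v$) and to the right (giving $\lambda\,\bm \phi_i \bm v$), and subtract to obtain $(\lambda_i - \lambda)\bm \phi_i \bm v = 0$.

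This is where the hypotheses enter and where the only real care is required. Because $\lambda$ has multiplicity $1$, every other eigenvalue satisfies $\lambda_i \neq \lambda$, hence $\lambda_i - \lambda \neq 0$ and therefore $\bm \phi_i \bm v = 0$; substituting back yields $\bm \phi_i(\bm A + \bm v \bm r) = \lambda_i \bm \phi_i$, as claimed. I expect the main (still mild) obstacle to be precisely this orthogonality step and its reliance on the simplicity of $\lambda$ — the algebraic substitutions on either side of it are routine. I would also remark that no diagonalizability of $\bm A$ is needed, since the argument treats each genuine row eigenpair $(\lambda_i,\bm \phi_i)$ independently rather than relying on a full eigenbasis.
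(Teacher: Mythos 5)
Your proposal is correct and follows essentially the same route as the paper's proof: both establish the orthogonality $\bm \phi_i \bm v = 0$ by evaluating $\bm \phi_i \bm A \bm v$ from the left and the right and invoking the simplicity of $\lambda$, then verify both eigenpair claims by direct substitution. The only difference is the order of presentation (you handle the new eigenpair first), which is immaterial.
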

\begin{proof}
Since $\bm \phi_i$ is a \emph{row eigenvector} of $\bm A$ associated
with eigenvalue $\lambda_i$, we have
\begin{equation}
\bm \phi_i \bm A \bm v = \lambda_i \bm \phi_i \bm v.
\end{equation}
On the other hand, since $\bm v$ is a \emph{column eigenvector} of
$\bm A$, we have
\begin{equation}
\bm \phi_i \bm A \bm v = \lambda \bm \phi_i \bm v.
\end{equation}
Since $\lambda$ has multiplicity 1, $\lambda_i$ is not equal to
$\lambda$. Comparing the above two equations, we directly have
\begin{equation}\label{eq_phiv}
\bm \phi_i \bm v = 0.
\end{equation}
Below, we further study the eigenvalue of matrix $\bm A + \bm v\bm
r$. Using (\ref{eq_phiv}), we have
\begin{equation}
\bm \phi_i (\bm A + \bm v \bm r) = \bm \phi_i \bm A = \lambda_i \bm
\phi_i.
\end{equation}
Therefore, $\lambda_ i$ continues to be an eigenvalue of $\bm A +
\bm v \bm r$ and $\bm \phi_i$ continues to be the associated row
eigenvector of $\bm A + \bm v \bm r$.

Moreover, we have
\begin{equation}
(\bm A + \bm v\bm r) \bm v = \bm A \bm v + \bm v (\bm r\bm v) =
(\lambda + \bm r\bm v) \bm v,
\end{equation}
Therefore, $\lambda + \bm{rv}$ is a new eigenvalue of $\bm A +
\bm{vr}$ and $\bm v$ continues to be the associated column
eigenvector of $\bm A + \bm{vr}$. The lemma is proved.
\end{proof}

For the eigenvalues of the transition probability matrix of a Markov
chain, we have the following lemma.
\begin{lemma}\label{lemma2}
If $\bm P$ is the stochastic matrix of an irreducible and aperiodic
Markov chain, its spectral radius $\rho(\bm P) = \lambda_{1} = 1$
and $|\lambda_{i}| < 1$ for $i \neq 1$, where $\lambda_{i}$ is the
eigenvalues of $\bm P$ descendingly sorted by their modulus.
Moreover, $\lambda_{1} = 1$ is a simple eigenvalue and the
associated column eigenvector is $\bm x_1 = \bm e$.
\end{lemma}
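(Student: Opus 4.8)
The plan is to establish the four assertions by combining the defining stochastic structure of $\bm P$ with the Perron--Frobenius theorem for irreducible nonnegative matrices. First I would dispose of the claims concerning $\lambda_1 = 1$. Since $\bm P$ is a stochastic matrix, all of its row sums equal one, which is exactly the relation $\bm P \bm e = \bm e$ recorded above; this immediately exhibits $1$ as an eigenvalue of $\bm P$ with associated column eigenvector $\bm x_1 = \bm e$. To see that no eigenvalue can exceed $1$ in modulus, I would invoke Gershgorin's circle theorem (equivalently, bound the spectral radius by the maximum absolute row sum): each Gershgorin disc is centered at $P(i,i) \ge 0$ with radius $\sum_{j \neq i} P(i,j) = 1 - P(i,i)$, so every disc lies inside the closed unit disc, forcing $|\lambda| \le 1$ for all eigenvalues $\lambda$. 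Together with the fact that $1$ is itself an eigenvalue, this gives $\rho(\bm P) = 1$.

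The remaining two claims — that $\lambda_1 = 1$ is simple and that every other eigenvalue lies strictly inside the unit circle — are where irreducibility and aperiodicity enter and form the substantive part of the argument. Simplicity follows from the Perron--Frobenius theorem, which asserts that for an irreducible nonnegative matrix the spectral radius is an algebraically simple eigenvalue. Applying this to $\bm P$ with $\rho(\bm P) = 1$ shows that $1$ is a simple eigenvalue, so the associated eigenspace is one-dimensional and spanned by $\bm e$.

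The hard part will be the strict inequality $|\lambda_i| < 1$ for $i \neq 1$, which is precisely where aperiodicity becomes indispensable. The Perron--Frobenius theory of irreducible matrices shows that the number of eigenvalues lying on the circle of radius $\rho(\bm P)$ equals the period $h$ of the matrix, and that these peripheral eigenvalues are the $h$-th roots of unity scaled by $\rho(\bm P)$. For an aperiodic chain the period is $h = 1$, so the only eigenvalue on the unit circle is $\lambda_1 = 1$ itself, and every other eigenvalue must therefore satisfy $|\lambda_i| < 1$. Assembling these pieces — stochasticity for the existence of the unit eigenvalue and the bound $\rho(\bm P) \le 1$, irreducibility for simplicity, and aperiodicity to exclude other peripheral eigenvalues — completes the proof, and the descending ordering by modulus then places $\lambda_1 = 1$ strictly first.
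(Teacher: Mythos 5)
Your proof is correct and takes essentially the same route as the paper: the paper omits the proof of this lemma entirely, stating that it follows directly from the Perron--Frobenius theorem and deferring to reference books, and Perron--Frobenius is precisely the machinery you invoke. Your write-up merely supplies the details the paper leaves implicit --- $\bm P \bm e = \bm e$ exhibiting the eigenvalue $1$, the Gershgorin (row-sum) bound giving $\rho(\bm P) \le 1$, irreducibility yielding simplicity, and the peripheral-spectrum/period characterization using aperiodicity to force $|\lambda_i| < 1$ for $i \neq 1$ --- all of which are sound.
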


This lemma can be obtained directly from the \emph{Perron-Frobeniu
theorem} that was separately proposed by Oskar Perron in 1907
\cite{Perron07} and Georg Frobenius in 1908 \cite{Frobenius12}. The
original Perron-Frobeniu theorem aims to study the eigenvalues and
eigenvectors of nonnegative matrix. Since the stochastic matrix is a
special case of nonnegative matrix, we can obtain more specific
properties of stochastic matrix, such as the statement in the above
lemma. The proof of this lemma is ignored and interested readers can
find it from reference books \cite{Berman94,Cao07}.

With the above lemmas, we derive the following theorem about the
eigenvalues of matrix $\bm I - \bm P + \bm e \bm r$.
\begin{theorem}\label{theorem1}
Assume that $\bm P$ is the stochastic matrix of an irreducible and
aperiodic Markov chain. Denote $\lambda_i$ as the eigenvalues of
$\bm P$ in descending order of their modulus, $\bm \phi_i$ and $\bm
x_i$ are the associated row and column eigenvectors, respectively.
Then, for any row vector $\bm r$, the matrix $\bm I - \bm P + \bm e
\bm r$ has the following property: one eigenvalue is $\bm r \bm e$
and the associated column eigenvector is $\bm e$; other eigenvalues
are $1-\lambda_i$ for $i \neq 1$, the associated row eigenvectors
are $\bm \phi_i$, and the associated column eigenvectors are $\bm
x_i + \frac{\bm r \bm x_i}{\lambda_i - \bm r \bm e}\bm e$ if $\bm r
\bm e \neq \lambda_i$.
\end{theorem}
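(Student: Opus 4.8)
The plan is to read $\bm I - \bm P + \bm e \bm r$ as the rank-one additive perturbation $\bm A + \bm v \bm r$ with $\bm A := \bm I - \bm P$ and $\bm v := \bm e$, so that Lemma~\ref{lemma1} applies directly. First I would transfer the spectral data of $\bm P$ to $\bm A$: from $\bm P \bm x_i = \lambda_i \bm x_i$ and $\bm \phi_i \bm P = \lambda_i \bm \phi_i$ we get $\bm A \bm x_i = (1-\lambda_i)\bm x_i$ and $\bm \phi_i \bm A = (1-\lambda_i)\bm \phi_i$, so $\bm A$ keeps every eigenvector of $\bm P$ while sending each eigenvalue $\lambda_i$ to $1-\lambda_i$. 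Lemma~\ref{lemma2} then supplies exactly the hypothesis Lemma~\ref{lemma1} needs: $\lambda_1=1$ is a simple eigenvalue of $\bm P$ with column eigenvector $\bm x_1=\bm e$, hence $0$ is a simple eigenvalue of $\bm A$ whose column eigenvector is precisely $\bm v = \bm e$.

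With this identification in hand, applying Lemma~\ref{lemma1} to $\bm A$ at the simple eigenvalue $\lambda = 0$ and $\bm v = \bm e$ settles two of the three assertions at once. The perturbed eigenvalue is $\lambda + \bm r \bm v = \bm r \bm e$ with column eigenvector $\bm e$; and each of the remaining eigenvalues $1-\lambda_i$ (for $i\neq 1$) survives as an eigenvalue of $\bm A + \bm e \bm r = \bm I - \bm P + \bm e \bm r$, carrying along its row eigenvector $\bm \phi_i$. So the eigenvalues and the row eigenvectors require nothing beyond Lemma~\ref{lemma1}.

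The real work is the column eigenvectors attached to the surviving eigenvalues $1-\lambda_i$, which Lemma~\ref{lemma1} does not provide. Because the perturbation $\bm e \bm r$ pushes everything into the direction of $\bm e$, I would try the ansatz $\bm y_i = \bm x_i + c_i \bm e$ and substitute it into $(\bm I - \bm P + \bm e \bm r)\bm y_i = (1-\lambda_i)\bm y_i$. Using $\bm P \bm e = \bm e$ (so the $c_i \bm e$ term is annihilated by $\bm I - \bm P$) together with $\bm P \bm x_i = \lambda_i \bm x_i$, the $\bm x_i$-part reproduces $(1-\lambda_i)\bm x_i$ automatically, and the whole equation collapses to a single scalar condition on $c_i$ involving only $\bm r \bm x_i$ and $\bm r \bm e$. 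Solving it yields $c_i$ in closed form, valid exactly when the surviving eigenvalue $1-\lambda_i$ differs from the perturbed eigenvalue $\bm r \bm e$.

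The one delicate point — and the step most prone to error — is this final scalar matching, where two contributions along $\bm e$ must be combined: the term $\bm e \bm r(\bm x_i + c_i \bm e) = (\bm r \bm x_i + c_i\,\bm r \bm e)\bm e$ against the right-hand side $(1-\lambda_i)c_i \bm e$. Equating them forces the denominator of $c_i$ to be the gap $(1-\lambda_i) - \bm r \bm e$ between the two eigenvalues, so that the nondegeneracy hypothesis reads $\bm r \bm e \neq 1-\lambda_i$; it is easy here to slip and write $\lambda_i$ where $1-\lambda_i$ belongs, so I would confirm the coefficient on a small explicit instance (for example $\bm P$ the $2\times 2$ uniform doubly stochastic matrix) before committing to the final form.
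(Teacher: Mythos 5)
Your proposal is correct, and it follows essentially the same route as the paper: identify $\bm I - \bm P + \bm e \bm r$ as the rank-one perturbation $\bm A + \bm v \bm r$ with $\bm A = \bm I - \bm P$, $\bm v = \bm e$, use Lemma~\ref{lemma2} to see that $0$ is a simple eigenvalue of $\bm A$ with column eigenvector $\bm e$, invoke Lemma~\ref{lemma1} for the eigenvalue $\bm r \bm e$ and the surviving eigenvalues $1-\lambda_i$ with row eigenvectors $\bm \phi_i$, and then obtain the column eigenvectors by substituting the ansatz $\bm x_i + c_i \bm e$ — which is exactly the paper's own verification step.

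More importantly, the slip you anticipated in the final scalar matching is actually present in the theorem as stated. Your computation gives
\begin{equation}
c_i = \frac{\bm r \bm x_i}{(1-\lambda_i) - \bm r \bm e}, \qquad \bm r \bm e \neq 1-\lambda_i,
\end{equation}
and this is the correct coefficient when $\lambda_i$ denotes the eigenvalues of $\bm P$, as the theorem defines them. The paper's displayed verification writes $(\bm I - \bm P)\bm x_i = \lambda_i \bm x_i$ and concludes with eigenvalue $\lambda_i$ for the perturbed matrix; that is, it silently reuses the symbol $\lambda_i$ for the eigenvalues of $\bm I - \bm P$. Under that relabeling the algebra is internally consistent, but in the theorem's own notation the denominator $\lambda_i - \bm r \bm e$ and the condition $\bm r \bm e \neq \lambda_i$ should read $(1-\lambda_i) - \bm r \bm e$ and $\bm r \bm e \neq 1-\lambda_i$. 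Your suggested sanity check confirms this: for the $2\times 2$ uniform doubly stochastic $\bm P$ one has $\lambda_2 = 0$, $\bm x_2 = (1,-1)^T$, and the theorem's formula yields a vector proportional to $(r_2, -r_1)^T$, which in general is not an eigenvector of $\bm I - \bm P + \bm e \bm r$ for the eigenvalue $1-\lambda_2 = 1$, whereas your corrected formula yields a vector proportional to $(1-2r_2,\, 2r_1-1)^T$, which is. So your proof is not merely equivalent to the paper's — it fixes a notational error in the stated eigenvector formula and its nondegeneracy condition.
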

\begin{proof}
With Lemma~\ref{lemma2}, we see that $\lambda_1=1$ and
$|\lambda_{i}| < 1$ for $i \neq 1$. We denote $\bm A := \bm I - \bm
P$. It is easy to verify that the eigenvalues of $\bm A$ are
$1-\lambda_i$ and the associated row eigenvectors are the same as
$\bm \phi_i$. That is, $0$ is an eigenvalue of $\bm A$ with
simplicity 1 and the associated column eigenvector is $\bm e$.
Therefore, by applying Lemma~\ref{lemma1}, we see that $\bm r \bm e$
is an eigenvalue of matrix $\bm I - \bm P + \bm e \bm r$ and the
associated column vector is $\bm e$; other eigenvalues are
$1-\lambda_i$ with the associated row eigenvector $\bm \phi_i$ for
$i \neq 1$. The column eigenvectors of $\bm I - \bm P + \bm e \bm r$
can be verified as follows.
\begin{eqnarray}
&&(\bm I-\bm P+\bm e \bm r) \left(\bm x_i +
\frac{\bm r \bm x_i}{\lambda_i-\bm r\bm e} \bm e \right) \nonumber\\
&=& (\bm I - \bm P)\bm x_i + (\bm I - \bm P)\bm e \frac{\bm r \bm x_i}{\lambda_i-\bm r\bm e} + \bm e \bm r \bm x_i + \bm e \bm r \bm e \frac{\bm r \bm x_i}{\lambda_i-\bm r \bm e} \nonumber\\
&=& \lambda_i \bm x_i + \bm 0 + \bm r \bm x_i \bm e +
\frac{\bm r\bm e}{\lambda_i-\bm r\bm e} \bm r \bm x_i \bm e \nonumber\\
&=& \lambda_i \left( \bm x_i + \frac{\bm r \bm x_i}{\lambda_i-\bm r
\bm e} \bm e \right).
\end{eqnarray}
Therefore, the theorem is proved.
\end{proof}

The eigenvalue of a matrix may be a complex number. With
Lemma~\ref{lemma2}, we can see that the eigenvalues of $\bm P$ have
$|\lambda_i|<1$ for $i \neq 1$ and they are located in the unit
circle in the complex plane, as illustrated by the left sub-figure
of Fig~\ref{fig_circleP}. With Theorem~\ref{theorem1}, we can see
that the eigenvalues of $\bm I - \bm P + \bm e \bm r$ are
$1-\lambda_i$ for $i \neq 1$ and they are also located in the unit
circle illustrated by the right sub-figure of
Fig.~\ref{fig_circleP}.

\begin{figure}[htbp]
\centering
\includegraphics[width=0.9\columnwidth]{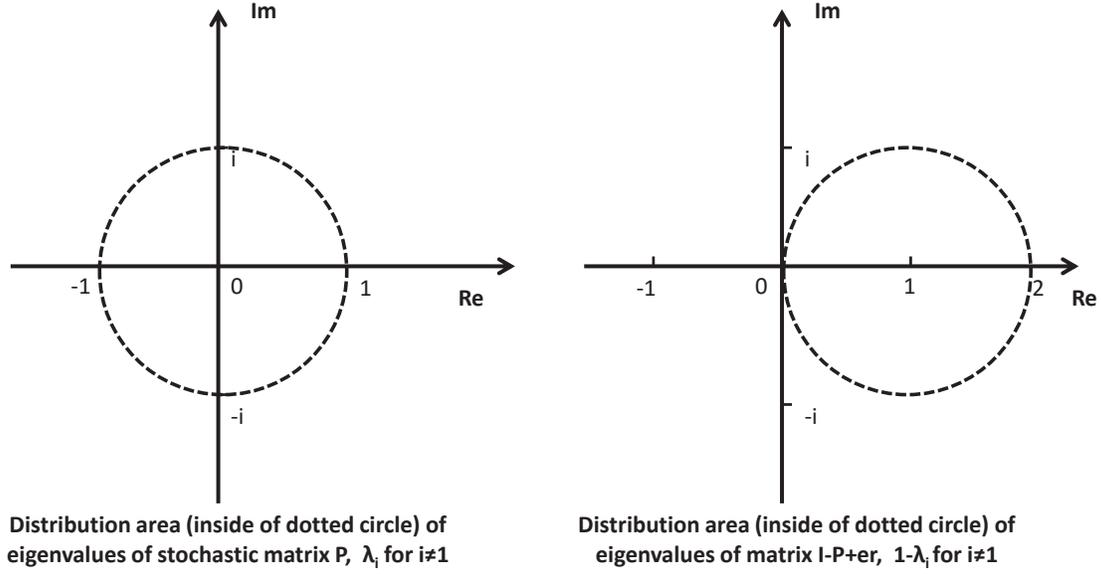}
\caption{The distribution area of eigenvalues of matrix $\bm P$ and
$\bm I - \bm P + \bm e \bm r$, i.e., $\lambda_i$ and $1-\lambda_i$
for $i \neq 1$.}\label{fig_circleP}
\end{figure}

Therefore, we can directly derive the following theorem.
\begin{theorem}\label{theorem2}
The matrix $\bm I-\bm P+\bm e \bm r$ is not singular if and only if
$\bm r \bm e \neq 0$, where $\bm P$ is the stochastic matrix of an
irreducible and aperiodic Markov chain.
\end{theorem}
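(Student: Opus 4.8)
The plan is to read the conclusion directly off the spectrum of $\bm I - \bm P + \bm e \bm r$ that Theorem~\ref{theorem1} already supplies, using the elementary fact that a square matrix is singular if and only if $0$ is one of its eigenvalues (equivalently, its determinant, the product of its eigenvalues counted with algebraic multiplicity, vanishes). Under this reduction the entire argument amounts to deciding precisely when $0$ appears in the eigenvalue list, and no further machinery is needed.

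First I would recall from Theorem~\ref{theorem1} that the eigenvalues of $\bm I - \bm P + \bm e \bm r$ are exactly $\bm r \bm e$, with column eigenvector $\bm e$, together with $1 - \lambda_i$ for $i \neq 1$. I would then stress that this is a \emph{complete} enumeration of the spectrum: the matrix $\bm P$ has $S$ eigenvalues counted with multiplicity, namely the simple eigenvalue $\lambda_1 = 1$ (by Lemma~\ref{lemma2}) and the remaining $\lambda_i$ with $i \neq 1$; Theorem~\ref{theorem1} replaces the contribution of the simple eigenvalue $\lambda_1 = 1$ by the value $\bm r \bm e$ and maps each remaining $\lambda_i$ to $1 - \lambda_i$, accounting for all $S$ eigenvalues. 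This dimension count is exactly what rules out any overlooked ``hidden'' zero eigenvalue.

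Next I would check each candidate eigenvalue against $0$. By Lemma~\ref{lemma2}, $|\lambda_i| < 1$ for every $i \neq 1$, so in particular $\lambda_i \neq 1$ and hence $1 - \lambda_i \neq 0$ for all $i \neq 1$. Thus none of the eigenvalues inherited from the strictly subdominant spectrum of $\bm P$ can vanish, and the only eigenvalue that can possibly equal $0$ is $\bm r \bm e$.

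Finally I would assemble the equivalence. If $\bm r \bm e \neq 0$, then every eigenvalue of $\bm I - \bm P + \bm e \bm r$ is nonzero, so its determinant is nonzero and the matrix is nonsingular. Conversely, if $\bm r \bm e = 0$, then by Theorem~\ref{theorem1} the value $\bm r \bm e = 0$ is an eigenvalue with eigenvector $\bm e$, so $\bm e$ lies in the kernel and the matrix is singular. Taking the contrapositive of this second statement gives the remaining implication, and together the two establish the ``if and only if''. I do not anticipate any genuine obstacle in this argument; the only point that truly requires care is confirming that Theorem~\ref{theorem1} exhausts the spectrum, so that the absence of a zero among the listed eigenvalues really does certify nonsingularity, and this is settled by the dimension count together with the simplicity of $\lambda_1 = 1$ from Lemma~\ref{lemma2}.
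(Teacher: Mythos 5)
Your proposal is correct and follows essentially the same route as the paper's own proof: read the full spectrum of $\bm I - \bm P + \bm e \bm r$ from Theorem~\ref{theorem1}, use $|\lambda_i| < 1$ for $i \neq 1$ from Lemma~\ref{lemma2} to rule out $1 - \lambda_i = 0$, and conclude that singularity occurs exactly when $\bm r \bm e = 0$. If anything, you are slightly more careful than the paper, which argues only the ``if'' direction explicitly, whereas you also verify the converse by exhibiting $\bm e$ in the kernel when $\bm r \bm e = 0$ and by confirming that the eigenvalue list is exhaustive.
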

\begin{proof}
The proof of this theorem is very straightforward. Based on
Theorem~\ref{theorem1} and Lemma~\ref{lemma2}, we see that the
eigenvalues of $\bm I - \bm P + \bm e \bm r$ are either $\bm r \bm
e$ or $1-\lambda_i$ for $i \neq 1$. Since $\bm r \bm e \neq 0$ and
$|\lambda_i|<1$ for $i \neq 1$, we can easily verify that $0$ is not
the eigenvalue of $\bm I - \bm P + \bm e \bm r$. Therefore, the
matrix is invertible and the theorem is proved.
\end{proof}

With Theorem~\ref{theorem2}, we see that $\bm I - \bm P + \bm e \bm
r$ is invertible if $\bm r \bm e \neq 0$. Therefore, we define a
\emph{generalized fundamental matrix} as below.
\begin{equation}\label{eq_Zr}
\bm{Z_r} := (\bm I - \bm P + \bm e \bm r)^{-1}, \qquad \bm r \bm e
\neq 0.
\end{equation}
Compared with the fundamental matrix $\bm Z^*:=(\bm I - \bm P + \bm
e \bm \pi)^{-1}$ defined in the literature \cite{Kemeny60}, $\bm
Z^*$ can be viewed as a special case of $\bm{Z_r}$ with $\bm r = \bm
\pi$.

The generalized fundamental matrix $\bm{Z_r}$ is an important
quantity of Markov chains and it can be utilized to compute the
performance potential and the steady state distribution, as we will
discuss in the following subsections.

\subsection{Computation of Performance Potential}\label{subsection_PP}
As we know, the value function (or performance potential) is an very
important quantity of Markov decision processes. In a standard
policy iteration procedure, we have to compute the value function
for the current policy, which is called the \emph{policy evaluation}
step \cite{Puterman94}. In the approximate dynamic programming, we
study various approximation approaches to simplify the computation
of the value function to alleviate the curse of dimensionality
\cite{Bertsekas12}. Therefore, the efficient computation of the
value function is an very important topic in the field of MDPs. Note
that the computation of value functions under the discount criterion
is easy, because the associated Poisson equation has a unique
solution. We focus on the value function under the long-run average
criterion of MDPs.

The \emph{performance potential} is an alias of the value function
and it has a special physical meaning from the perspective of the
perturbation analysis and the sensitivity-based optimization
\cite{Cao07}. In the following content, we will use the term of
performance potential to study how to compute or estimate it. We
denote the performance potential as an $S$-dimensional column vector
$\bm g$ and its element $g(i)$, $i \in \mathcal S$, is defined as
below.
\begin{equation}\label{eq_g}
g(i) := \lim\limits_{T \rightarrow \infty} E\left\{
\sum_{t=0}^{T-1}[f(X_t) - \eta] | X_0 = i \right\},
\end{equation}
where $X_t$ is the system state at time $t$, $f(X_t)$ is the system
reward at state $X_t$, and $\eta$ is the long-run average
performance defined as below.
\begin{equation}
\eta := \lim\limits_{T \rightarrow \infty} E\left\{ \frac{1}{T}
\sum_{t=0}^{T-1} f(X_t) | X_0 = i \right\} = \lim\limits_{T
\rightarrow \infty} \frac{1}{T} \sum_{t=0}^{T-1} f(X_t),
\end{equation}
where the second equality holds when the Markov chain is a unichain.

Extending the right-hand side of (\ref{eq_g}) at time $t=0$ and
recursively substituting (\ref{eq_g}), we can obtain
\begin{equation}\label{eq_g2}
g(i) = f(i) - \eta + \sum_{j \in \mathcal S} p(i,j)g(j).
\end{equation}
Rewriting the above equation in a matrix form, we obtain the
\emph{Poisson equation} as below.
\begin{equation}\label{eq_poisson}
\bm g = \bm f - \eta \bm e + \bm P \bm g.
\end{equation}
The above equation can be rewritten as below.
\begin{equation}
(\bm I - \bm P)\bm g = \bm f - \eta \bm e.
\end{equation}
However, as we know from Lemma~\ref{lemma2}, the matrix $(\bm I -
\bm P)$ has an eigenvalue with value 0 and it is not invertible.
Noticing the fact that $\bm g + c \bm e$ is still a solution to
(\ref{eq_poisson}) for any constant $c$, we can properly choose $c$
to let $\bm \pi \bm g = \eta$. Therefore, we have
\begin{equation}
(\bm I - \bm P + \bm e \bm \pi)\bm g = \bm f.
\end{equation}
With Theorem~\ref{theorem2}, we see that matrix $(\bm I - \bm P +
\bm e \bm \pi)$ is invertible. The inverse matrix is called the
fundamental matrix defined in the literature \cite{Kemeny60} and it
has
\begin{equation}
(\bm I - \bm P + \bm e \bm \pi)^{-1} = \sum_{n=0}^{\infty} (\bm P -
\bm e \bm \pi)^n = \bm I + \sum_{n=1}^{\infty} (\bm P^n - \bm e \bm
\pi).
\end{equation}
Therefore, the solution to the Poisson equation (\ref{eq_poisson})
can be written as below.
\begin{equation}\label{eq_g3}
\bm g = (\bm I - \bm P + \bm e \bm \pi)^{-1} \bm f.
\end{equation}

The above formula widely exists in the literature \cite{Cao97,Cao07}
and we can use it to numerically compute the value of $\bm g$ for a
specific MDP. Note that the value of $\bm g$ computed with
(\ref{eq_g3}) satisfies the condition $\bm \pi \bm g = \eta$.
However, $\bm \pi$ is not a given parameter in the above equation.
We have to compute the value of $\bm \pi$ before we can use
(\ref{eq_g3}) to compute $\bm g$. This increases the computation
burden. Moreover, if we conduct online estimation, the estimation
error of $\bm \pi$ may increase the estimation variance of $\bm g$.

Fortunately, we have another way to numerically compute $\bm g$
without the extra computation for $\bm \pi$. Since $\bm g + c \bm e$
is still a solution to (\ref{eq_poisson}) for any constant $c$, for
any $S$-dimensional row vector $\bm r$ satisfying $\bm r \bm e \neq
0$, we can choose a proper $c$ to let $\bm r \bm g = \eta$.
Therefore, we can rewrite (\ref{eq_poisson}) as below.
\begin{equation}
(\bm I - \bm P + \bm e \bm r)\bm g = \bm f.
\end{equation}
Since matrix $(\bm I - \bm P + \bm e \bm r)$ is always invertible as
proved in Theorem~\ref{theorem2}, the above equation can be further
rewritten as
\begin{center}
\begin{boxedminipage}{1\columnwidth}
\begin{equation}\label{eq_g4}
\bm g = (\bm I - \bm P + \bm e \bm r)^{-1} \bm f,
\end{equation}
\vspace{-10pt}
\end{boxedminipage}
\end{center}
where $\bm r$ is any $S$-dimensional row vector $\bm r$ satisfying
$\bm r \bm e \neq 0$. We can see that all the parameters in
(\ref{eq_g4}) are given and we can directly compute $\bm g$ with
(\ref{eq_g4}) without any pre-computation.

\noindent \textbf{Remark 1.} Both (\ref{eq_g3}) and (\ref{eq_g4})
are solutions to the Poisson equation (\ref{eq_poisson}) and they
have difference only with a constant column vector $c \bm e$. The
value of $\bm g$ in (\ref{eq_g3}) satisfies $\bm \pi \bm g = \eta$,
while the value of $\bm g$ in (\ref{eq_g4}) satisfies $\bm r \bm g =
\eta$.

\noindent \textbf{Remark 2.} (\ref{eq_g3}) can be viewed as a
special case of (\ref{eq_g4}) if we choose $\bm r = \bm \pi$. With
(\ref{eq_g4}), we have more flexibility to choose different $\bm
r$'s, which may give some insights on the computation or estimation
of $\bm g$.

\subsection{Computation of Steady State Distribution}\label{subsection_SSD}
The fundamental matrix can also be used to compute the steady state
distribution of Markov chains. We also assume that the Markov chain
is irreducible and aperiodic. We know that $\bm \pi$ can be
determined by the following set of linear equations
\begin{equation}
\begin{array}{l}
\bm \pi \bm P = \bm \pi. \\
\bm \pi \bm e = 1.
\end{array}
\end{equation}
We can rewrite the above equations according to the standard form of
linear equations as below.
\begin{equation}
\begin{array}{l}
(\bm I - \bm P^{T}) \bm \pi^{T} = \bm 0. \\
\bm e^T \bm \pi^T = 1.
\end{array}
\end{equation}
That is,
\begin{equation}\label{eq4}
\left[
\left( \begin{array}{ccccc} 1 & 0 & 0 & \cdots & 0 \\
0 & 1 & 0 & \cdots & 0 \\
0 & 0 & 1 & \cdots & 0 \\
\vdots & \vdots & \vdots & \ddots & \vdots \\
0 & 0 & 0 & \cdots & 1 \\
\end{array}
\right) - \left( \begin{array}{ccccc} P_{1,1} & P_{2,1} & P_{3,1} & \cdots & P_{S,1} \\
P_{1,2} & P_{2,2} & P_{3,2} & \cdots & P_{S,2} \\
P_{1,3} & P_{2,3} & P_{3,3} & \cdots & P_{S,3} \\
\vdots & \vdots & \vdots & \ddots & \vdots \\
P_{1,S} & P_{2,S} & P_{3,S} & \cdots & P_{S,S} \\
\end{array}
\right) \right] \left( \begin{array}{c} \pi_1 \\
\pi_2 \\
\pi_3 \\
\vdots \\
\pi_S \\
\end{array} \right) = \left( \begin{array}{c} 0 \\
0 \\
0 \\
\vdots \\
0 \\
\end{array} \right).
\end{equation}
\begin{equation}\label{eq5}
\pi_1 + \pi_2 + \pi_3 + \dots + \pi_S = 1.
\end{equation}
For any $S$-dimensional row vector $\bm r$ satisfying $\bm r \bm e
\neq 0$, we multiply $r(i)$ on both sides of (\ref{eq5}) and summate
this equation to the $i$th equation of (\ref{eq4}),
$i=1,2,\cdots,S$. We can obtain
\begin{equation}
(\bm I - \bm P^{T} + \bm r^T \bm e^T) \bm \pi^{T} = \bm r^T.
\end{equation}

With Theorem~\ref{theorem2}, we know that  $\bm I - \bm P^{T} + \bm
r^T \bm e^T$ is invertible and $(\bm I - \bm P^{T} + \bm r^T \bm
e^T)^{-1} = \bm Z^{T}_{\bm r}$. Therefore, we have
\begin{equation}
\bm \pi^{T} = (\bm I - \bm P^{T} + \bm r^T \bm e^T)^{-1} \bm r^T,
\end{equation}
or
\begin{center}
\begin{boxedminipage}{1\columnwidth}
\begin{equation}\label{eq_pi}
\bm \pi = \bm r (\bm I - \bm P + \bm e \bm r)^{-1}, \qquad \bm r \bm
e \neq 0.
\end{equation}
\vspace{-10pt}
\end{boxedminipage}
\end{center}

From the above equation, we can see that the computation of $\bm
\pi$ has the same key part as the computation of $\bm g$ with
(\ref{eq_g4}), i.e., the computation of the generalized fundamental
matrix $\bm{Z_r}=(\bm I - \bm P + \bm e \bm r)^{-1}$. Therefore,
$\bm{Z_r}$ plays a key role in the analysis of Markov chains.

\subsection{Property Analysis and Estimation Algorithm}\label{subsection_Alg}
In this subsection, we discuss the properties of the generalized
fundamental matrix $\bm{Z_r}$ and the effect on the computation of
performance potentials. If the spectral radius of $\bm P - \bm e \bm
r$ is smaller than 1, we can rewrite the generalized fundamental
matrix as follows.
\begin{equation}\label{eq_Zr2}
\bm{Z_r} = \sum_{n=0}^{\infty}(\bm P - \bm e \bm r)^n, \qquad
\rho(\bm P - \bm e \bm r)<1.
\end{equation}
According to Lemma~\ref{lemma1}, we see that the eigenvalues of $\bm
P - \bm e \bm r$ are $\lambda_1 - \bm r \bm e$ and $\lambda_i$ for
$i \neq 1$, where $\lambda_i$ are the eigenvalues of $\bm P$ sorted
in the descending order of their modulus. With Lemma~\ref{lemma2},
we see that $\lambda_1=1$ and $|\lambda_i|<1$ for $i \neq 1$.
Therefore, we have
\begin{equation}
\rho(\bm P - \bm e \bm r)<1 \quad \Longleftrightarrow \quad 0<\bm r
\bm e <2.
\end{equation}
Furthermore, we can verify that (\ref{eq_Zr2}) can be rewritten as
below.
\begin{equation}
\bm{Z_r} = \sum_{n=0}^{\infty} \left[ \bm P^n - \bm e \bm r
\sum_{j=0}^{n-1} (1 - \bm r \bm e)^j \bm P^{n-1-j} \right], \qquad
0<\bm r \bm e <2.
\end{equation}
If we choose $\bm r$ such that $\bm r \bm e = 1$, then we can
further simplify the above equation as below.
\begin{equation}\label{eq_Zr3}
\bm{Z_r} = \sum_{n=0}^{\infty} \left[ \bm P^n - \bm e \bm r \bm
P^{n-1} \right], \qquad \bm r \bm e = 1,
\end{equation}
where we define $\bm e \bm r \bm P^{n-1} = 0$ when $n=0$. If we
choose $\bm r$ stochastic, then the $(i,j)$'th entry of $(\bm I -
\bm P + \bm e \bm r)^{-1}$ has the interpretation that it is a sum
over $n$ in which the $n$'th term (for $n \geq 1$) is $P(X_n = j |
X_0 = i) - P( X_{n-1} = j | X_0$ having initial distribution $\bm
r$). If we manipulate the terms in the summation of (\ref{eq_Zr3}),
we can obtain
\begin{equation}\label{eq_Zr4}
\bm{Z_r} = \sum_{n=0}^{\infty} \left[ \bm P^n - \bm e \bm r \bm
P^{n} \right] + \lim\limits_{n \rightarrow \infty}\bm e \bm r \bm
P^n, \qquad \bm r \bm e = 1.
\end{equation}
Since the Markov chain is irreducible, aperiodic, and finite, the
limiting probability exists and it equals the steady state
distribution. That is,
\begin{equation}
\lim\limits_{n \rightarrow \infty} \bm P^n = \bm e \bm \pi.
\end{equation}
Therefore, the above equation (\ref{eq_Zr4}) can be rewritten as
below.
\begin{equation}\label{eq_Zr5}
\bm{Z_r} = \sum_{n=0}^{\infty} \left[ \bm P^n - \bm e \bm r \bm
P^{n} \right] + \bm e \bm \pi, \qquad \bm r \bm e = 1.
\end{equation}
Therefore, neglecting the steady distribution $\bm e \bm \pi$ for
simplicity, we can see that the $n$'th term of the above summation
is $P(X_n = j | X_0 = i) - P( X_n = j | X_0$ having initial
distribution $\bm r$). This interpretation can help develop online
estimation algorithms for quantities related to $\bm{Z_r}$ from a
viewpoint of sample paths.

Substituting (\ref{eq_Zr5}) into (\ref{eq_g4}), we have
\begin{eqnarray}
\bm g &=& (\bm I - \bm P + \bm e \bm r)^{-1} \bm f \nonumber\\
&=& \sum_{n=0}^{\infty} \left[ \bm P^n - \bm e \bm r \bm P^{n}
\right] \bm f + \bm e \bm \pi \bm f \nonumber\\
&=& \sum_{n=0}^{\infty} \bm P^n \bm f - \bm e \bm r
\sum_{n=0}^{\infty} \bm P^{n} \bm f + \eta \bm e,
\end{eqnarray}
where $\bm r \bm e = 1$ and $\bm r \bm g = \eta$. Since $\bm g + c
\bm e$ is still a performance potential for any constant $c$, we can
neglect the term $\eta \bm e$ in the above equation and rewrite it
as below.
\begin{equation}\label{eq_42}
\bm g = \sum_{n=0}^{\infty} \bm P^n \bm f - \bm e \bm r
\sum_{n=0}^{\infty} \bm P^{n} \bm f,
\end{equation}
where $\bm r \bm e = 1$ and $\bm r \bm g = 0$.

From the above equation, we can see that $\sum_{n=0}^{\infty} \bm
P^n \bm f$ equals the expectation of the accumulated rewards along
the sample path, i.e., $\mathbb E\{\sum_{t=0}^{\infty} f(X_t)\}$. We
denote
\begin{equation}
\tilde{\bm g} = \sum_{n=0}^{\infty} \bm P^n \bm f = \mathbb E\left\{
\sum_{t=0}^{\infty} f(X_t) \right\}.
\end{equation}
or
\begin{equation}
\tilde{\bm g}_T = \mathbb E\left\{ \sum_{t=0}^{T} f(X_t) \right\},
\end{equation}
for a large constant $T$. We can rewrite (\ref{eq_42}) as below.
\begin{equation}
\bm g = \tilde{\bm g} - \bm e \bm r \tilde{\bm g}.
\end{equation}
When $\bm r$ is a probability distribution, the physical meaning of
the above equation is that: we sum all the rewards along the sample
path, then we use a weighting vector $\bm r$ to obtain a
\emph{reference level} $\bm r \tilde{\bm g}$, the gap between
$\tilde{\bm g}$ and the reference level $\bm r \tilde{\bm g}$ is
exactly the performance potential (this is also the reason that we
call it \emph{relative} value function).

The insight for the estimation algorithm is that we can just sum all
the rewards along the sample path, then we choose an arbitrary
reference level determined by a combination of elements of
$\tilde{\bm g}$, the gap between them is the estimate of performance
potentials. This can help to simplify the online estimation
procedure. $\bm r = \bm \pi$ is one of the special cases. For
example, we can also set $\bm r = (1, 0, 0, \cdots, 0)$, which lets
$\tilde{g}(1)$ as the reference level.

Instead of numerically computing $\bm g$ with (\ref{eq_g4}), we can
further develop an iterative algorithm to estimate the value of $\bm
g$ based on (\ref{eq_g4}). With (\ref{eq_g4}), we have
\begin{equation}
\bm g = \bm f - \bm e \bm r \bm g + \bm P \bm g.
\end{equation}
Suppose $\hat{\bm g}$ is an unbiased estimate of $\bm g$, i.e.,
$E(\hat{\bm g}) = \bm g$. We have
\begin{equation}
E(\hat{\bm g}) = \bm f - \bm e \bm r E(\hat{\bm g}) + \bm P
E(\hat{\bm g}).
\end{equation}
Rewriting the above equation as a \emph{sample path version}, we
derive the following equation that holds from the sense of
statistics
\begin{eqnarray}
E[\hat{g}(X_t)] &=& E[f(X_t) - \bm r \hat{\bm g} + \hat{g}(X_{t+1})]
\nonumber\\
&=& E[f(X_t)] - \sum_{i \in \mathcal S}r(i)E[\hat{g}(i)] +
E[\hat{g}(X_{t+1})].
\end{eqnarray}

Based on the above equation, we develop a \emph{least-squares}
algorithm that can online estimate $\bm g$ based on sample paths.
Define a quantity $z_{t}$ as below, which can be viewed as the new
information learned from the current feedback of the system.
\begin{equation}
z_{t} := f(X_t) - \sum_{i \in \mathcal S}r(i) \hat{g}(i) +
\hat{g}(X_{t+1}) - \hat{g}(X_{t}).
\end{equation}
With a stochastic approximation framework, we have the following
formula to update the value of $\hat{g}$
\begin{equation}\label{eq_lsg}
\hat{g}(X_t) \leftarrow \hat{g}(X_t) + \alpha_t z_{t},
\end{equation}
where $\alpha_t$ is a positive step-size that satisfies the
convergence condition of Robbins-Monro stochastic approximation,
i.e.,
\begin{equation}
\sum_{t=0}^{\infty}\alpha_t = \infty, \quad \sum_{t=0}^{\infty}
\alpha^2_t < \infty.
\end{equation}
or $\alpha_t$ satisfies an even looser condition as below
\cite{Chen14}.
\begin{equation}
\sum_{t=0}^{\infty}\alpha_t = \infty, \quad \lim\limits_{t
\rightarrow \infty} \alpha_t \rightarrow 0.
\end{equation}

The name of least-squares of update formula (\ref{eq_lsg}) comes
from the fact that we aim to obtain the following estimate of $\bm
g$ that can obtain the least squares of $z_t$ in statistics, i.e.,
\begin{equation}
\bm g = \argmin\limits_{\hat{\bm g}} E\{z^2_t\}
\end{equation}

The idea of least-squares update formula (\ref{eq_lsg}) is similar
to that of temporal-difference (TD) algorithm in reinforcement
learning \cite{Sutton98}. Below, we give a procedure framework of
the online estimation algorithm as illustrated in
Fig.~\ref{fig_algo}. In Fig.~\ref{fig_algo}, the algorithm stopping
criterion can be set as the norm of two successive estimates  is
smaller than a given small threshold $\epsilon$, i.e., $\parallel
\bm g - \bm g' \parallel < \epsilon$, or just simply stop the
algorithm after reaching a large step number $T$, i.e., $t > T$.

\begin{figure}
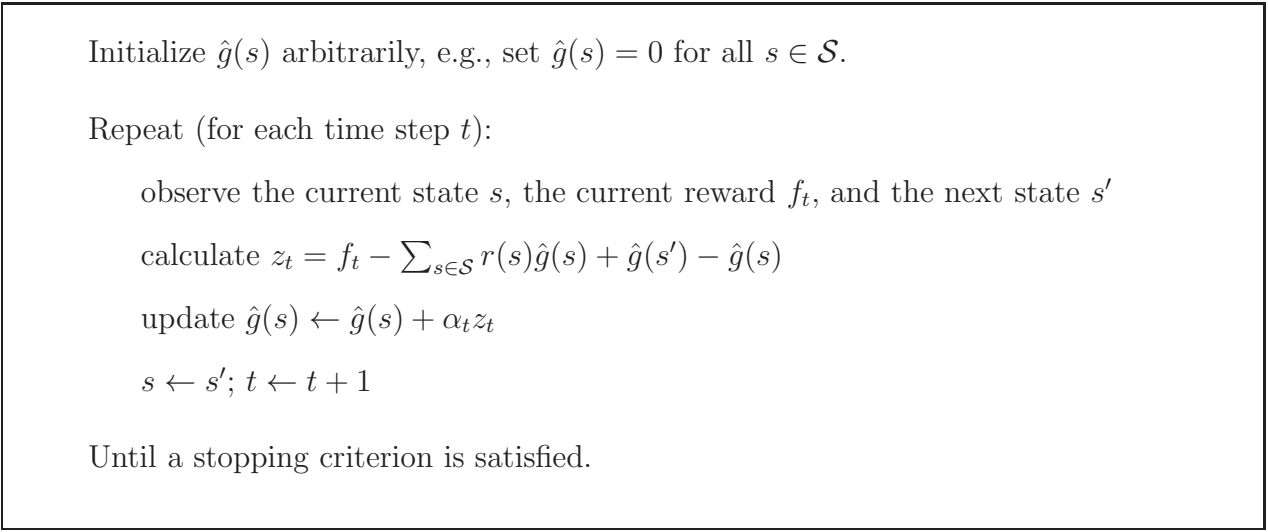

\begin{center}
\begin{boxedminipage}{1\columnwidth}
\vspace{5pt}
\begin{itemize}

\item[] Initialize $\hat{g}(s)$ arbitrarily, e.g., set $\hat{g}(s)=0$
for all $s \in \mathcal S$.

\item[] Repeat (for each time step $t$):

\subitem observe the current state $s$, the current reward $f_t$,
and the next state $s'$

\subitem calculate $z_t = f_t - \sum_{s \in \mathcal
S}r(s)\hat{g}(s) + \hat{g}(s') - \hat{g}(s)$

\subitem update $\hat{g}(s) \leftarrow \hat{g}(s)+\alpha_t z_t$

\subitem $s \leftarrow s'$; $t \leftarrow t+1$

\item[] Until a stopping criterion is satisfied.

\end{itemize}
\vspace{5pt}
\end{boxedminipage}
\end{center}
\caption{An online least-squares algorithm to estimate $\bm g$ based
on (\ref{eq_g4}).}\label{fig_algo}
\end{figure}

\section{Extension to Other Cases}\label{section_Ext}
In the previous section, we discuss the generalized fundamental
matrix for the discrete time Markov chain. In this section, we
extend the result to other cases, including the continuous time
Markov process and the Q-factors in reinforcement learning.

\subsection{Continuous Time Markov Process}\label{subsection_CTMP}
Consider a continuous time Markov process with transition rate
matrix $\bm B$. Assume the Markov process is ergodic, we can
similarly prove
\begin{theorem}\label{theorem5}
The eigenvalue of matrix $\bm B$ has the following property:
$\lambda^B_1 = 0$ is a simple eigenvalue and the associated column
eigenvector is $\bm x^B_1 = \bm e$; $ |\lambda^B_i + \gamma| <
\gamma$ for $i \neq 1$, where $\gamma$ is any constant satisfying
$\gamma \geq \max_{s}|B(s,s)|$.
\end{theorem}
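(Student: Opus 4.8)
The plan is to reduce the continuous-time generator $\bm B$ to a discrete-time stochastic matrix by \emph{uniformization} and then invoke Lemma~\ref{lemma2}. Fix any $\gamma \geq \max_s |B(s,s)|$ and define $\bm P := \bm I + \frac{1}{\gamma}\bm B$. First I would verify that $\bm P$ is a bona fide stochastic matrix. Because $\bm B$ is a conservative rate matrix, $B(i,j)\geq 0$ for $i\neq j$, $B(i,i) = -\sum_{j\neq i}B(i,j)\leq 0$, and the row sums vanish, i.e. $\bm B \bm e = \bm 0$. Hence $P(i,j) = B(i,j)/\gamma \geq 0$ for $i\neq j$, $P(i,i) = 1 - |B(i,i)|/\gamma \geq 0$ by the choice of $\gamma$, and $\bm P \bm e = \bm e + \frac{1}{\gamma}\bm B \bm e = \bm e$. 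Thus $\bm P$ is stochastic, and the identity $\bm B \bm e = \bm 0$ already delivers the first claim: $\lambda^B_1 = 0$ is an eigenvalue of $\bm B$ with column eigenvector $\bm x^B_1 = \bm e$.

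Next I would relate the two spectra. Since $\bm P = \bm I + \frac{1}{\gamma}\bm B$ is an affine function of $\bm B$, the matrices share all (row and column) eigenvectors, and an eigenvalue $\lambda^B_i$ of $\bm B$ corresponds to the eigenvalue $\mu_i := 1 + \lambda^B_i/\gamma$ of $\bm P$, preserving multiplicities. Ergodicity of the Markov process is equivalent to irreducibility of $\bm P$, because $P(i,j)>0$ exactly when $B(i,j)>0$ off the diagonal; moreover, for $\gamma>\max_s|B(s,s)|$ every self-loop probability $P(i,i)>0$, which forces aperiodicity. With $\bm P$ irreducible and aperiodic, Lemma~\ref{lemma2} applies: $\mu_1 = 1$ is a simple eigenvalue with column eigenvector $\bm e$ (consistent with $\lambda^B_1 = 0$), and $|\mu_i| < 1$ for $i \neq 1$. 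Translating back, $|\mu_i| = |1 + \lambda^B_i/\gamma| = |\gamma + \lambda^B_i|/\gamma < 1$, i.e. $|\lambda^B_i + \gamma| < \gamma$ for $i\neq 1$, and the simplicity of $\mu_1$ transfers to $\lambda^B_1 = 0$. This yields exactly the stated properties.

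The step I expect to be the main obstacle is handling aperiodicity cleanly at the boundary $\gamma = \max_s|B(s,s)|$. When $\gamma$ is strictly larger than every $|B(s,s)|$, all diagonal entries of $\bm P$ are positive and aperiodicity is automatic, so the strict bound $|\lambda^B_i + \gamma| < \gamma$ follows immediately. At equality the uniformized chain can be periodic: for the symmetric two-state generator with unit off-diagonal rates one obtains the permutation matrix with $P(1,2)=P(2,1)=1$, whose nonunit eigenvalue gives $|\lambda^B_2 + \gamma| = \gamma$ rather than a strict inequality. I would therefore either state the result for $\gamma > \max_s|B(s,s)|$, or observe that any single state with $|B(s,s)| < \gamma$ furnishes a self-loop that, together with irreducibility, restores aperiodicity, so the strict bound persists except in the degenerate case where all diagonal magnitudes equal $\gamma$. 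The remaining verifications---nonnegativity of the entries of $\bm P$ and the eigenvalue correspondence $\mu_i = 1 + \lambda^B_i/\gamma$---are routine and need no further comment.
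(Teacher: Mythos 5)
Your proof follows exactly the paper's route: uniformize via $\bm P = \bm I + \bm B/\gamma$, verify that $\bm P$ is stochastic and that $\bm B \bm e = \bm 0$ gives the eigenpair $(0, \bm e)$, apply Lemma~\ref{lemma2}, and translate back through $\lambda^B_i = \gamma(\lambda^P_i - 1)$. The one place you diverge is the aperiodicity question at the boundary $\gamma = \max_s |B(s,s)|$, and there you are more careful than the paper: the paper's proof simply asserts that ergodicity of the process transfers to the uniformized chain and then invokes Lemma~\ref{lemma2}, which requires aperiodicity, whereas your two-state example
\begin{equation*}
\bm B = \begin{pmatrix} -1 & 1 \\ 1 & -1 \end{pmatrix}, \qquad \gamma = 1, \qquad \bm P = \begin{pmatrix} 0 & 1 \\ 1 & 0 \end{pmatrix}
\end{equation*}
shows that this transfer genuinely fails at equality: the process is ergodic, but the uniformized chain is periodic, $\lambda^B_2 = -2$, and $|\lambda^B_2 + \gamma| = \gamma$, so the strict inequality claimed in the theorem is in fact false for $\gamma = \max_s|B(s,s)|$ in this case. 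Your proposed remedies --- restrict to $\gamma > \max_s|B(s,s)|$, or observe that any state with $|B(s,s)| < \gamma$ yields a positive diagonal entry of $\bm P$ and hence aperiodicity --- are exactly the right fixes. In short, your argument is the paper's argument done correctly: the gap is in the paper's hypothesis (``$\gamma \geq \max$'' should be strict, or aperiodicity of the uniformized chain should be argued separately), not in your proof.
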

\begin{proof}
By using the uniformization technique in MDPs, we can define a
matrix $\bm P$ as below.
\begin{equation}
\bm P := \bm I + \frac{\bm B}{\gamma}.
\end{equation}
Obviously, $\bm P$ is a stochastic matrix since it satisfies $\bm P
\bm e = \bm e$ and all of its elements are nonnegative. The
statistical behavior of the Markov chain with transition probability
matrix $\bm P$ is equivalent to that of the Markov process with
transition rate matrix $\bm B$ \cite{Puterman94}. We have
\begin{equation}\label{eq28}
\bm B = \gamma(\bm P - \bm I).
\end{equation}
Since the Markov process with $\bm B$ is ergodic, the equivalent
Markov chain with $\bm P$ is also ergodic. Therefore, $\bm P$ is an
ergodic stochastic matrix. With Lemma~\ref{lemma2}, we know that the
eigenvalue of $\bm P$ has
\begin{equation}
\left\{
\begin{array}{ll}
\lambda^P_i = 1, &i=1;\\
|\lambda^P_i| < 1, &i \neq 1;\\
\end{array}
\right.
\end{equation}
Therefore, with (\ref{eq28}), we see that the eigenvalue of $\bm B$
is $\lambda^B_i = \gamma(\lambda^P_i - 1)$ and the column
eigenvector is $\bm x^B_i = \bm x^P_i$, $\forall i$. We have
\begin{equation}
\left\{
\begin{array}{lll}
\lambda^B_i = 0, &\bm x^B_i=\bm e, &i=1;\\
|\lambda^B_i+\gamma| < \gamma, &\bm x^B_i=\bm x^P_i, &i \neq 1;\\
\end{array}
\right.
\end{equation}
The theorem is proved.
\end{proof}

With the above theorem, we know that the eigenvalue of $\bm B$ is
either $0$ or distributed inside of the dotted circle in
Fig.~\ref{fig_circleB}. When $\gamma$ is smaller, we can obtain a
tighter area describing the distribution area of $\lambda^B_i$.
Obviously, the smallest value of $\gamma$ is
\[
\gamma = \max_{s \in \mathcal S}|B(s,s)|.
\]

\begin{figure}[htbp]
\centering
\includegraphics[width=0.5\columnwidth]{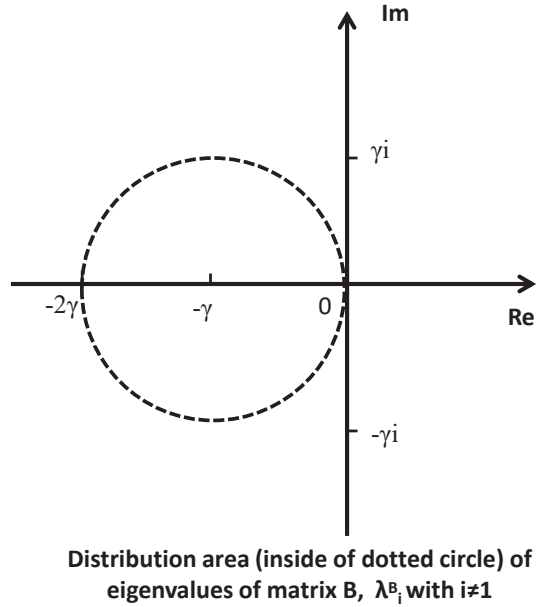}
\caption{The distribution area of eigenvalues of matrix $B$, i.e.,
$\lambda^B_i$ for $i \neq 1$.}\label{fig_circleB}
\end{figure}

Furthermore, we study the eigenvalue of matrix $\bm D := \bm B + \bm
e \bm r$ and we can similarly obtain the following theorem.
\begin{theorem}\label{theorem6}
The eigenvalue of matrix $\bm D := \bm B + \bm e \bm r$  has the
following property: $\lambda^D_1 = \bm r \bm e$ is a simple
eigenvalue and the associated column eigenvector is $\bm x^D_1 = \bm
e$; $\lambda^D_i = \lambda^B_i$ and the associated column
eigenvector is $\bm x^D_i = \bm x^B_i + \frac{\bm r \bm
x^B_i}{\lambda^B_i-\bm r \bm e}\bm e$ for $i \neq 1$ and
$\lambda^B_i \neq \bm r \bm e$.
\end{theorem}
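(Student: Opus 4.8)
The plan is to reduce Theorem~\ref{theorem6} to a direct application of Lemma~\ref{lemma1}, exactly paralleling the proof of Theorem~\ref{theorem1}, with the rate matrix $\bm B$ now playing the role that $\bm I - \bm P$ played there. The crucial structural fact supplied by Theorem~\ref{theorem5} is that $\lambda^B_1 = 0$ is a \emph{simple} eigenvalue of $\bm B$ with associated column eigenvector $\bm x^B_1 = \bm e$; equivalently $\bm B \bm e = \bm 0$, which is just the statement that the row sums of a generator vanish. This is precisely the hypothesis Lemma~\ref{lemma1} requires.

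First I would invoke Lemma~\ref{lemma1} with the identification $\bm A = \bm B$, the simple eigenvalue $\lambda = \lambda^B_1 = 0$, and its column eigenvector $\bm v = \bm e$. Since $\bm D = \bm B + \bm e \bm r = \bm A + \bm v \bm r$, the lemma immediately yields that $\lambda + \bm r \bm v = 0 + \bm r \bm e = \bm r \bm e$ is an eigenvalue of $\bm D$ with column eigenvector $\bm e$, and that every other eigenvalue $\lambda^B_i$ for $i \neq 1$ survives as an eigenvalue of $\bm D$ with its row eigenvector $\bm \phi^B_i$ unchanged. The simplicity of $\lambda^B_1 = 0$ is exactly what guarantees, via (\ref{eq_phiv}), that $\bm \phi^B_i \bm e = 0$, so that the row eigenvectors pass through untouched. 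This settles the eigenvalue part of the statement, namely $\lambda^D_1 = \bm r \bm e$ and $\lambda^D_i = \lambda^B_i$.

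It then remains only to exhibit the column eigenvectors for $i \neq 1$. Here I would substitute the proposed vector into $\bm D$ and compute directly, using the two relations $\bm B \bm x^B_i = \lambda^B_i \bm x^B_i$ and $\bm B \bm e = \bm 0$. Expanding and collecting the coefficient of $(\bm r \bm x^B_i)\bm e$ over the common denominator $\lambda^B_i - \bm r \bm e$ gives
\begin{eqnarray}
\bm D \left( \bm x^B_i + \frac{\bm r \bm x^B_i}{\lambda^B_i - \bm r \bm e} \bm e \right)
&=& \lambda^B_i \bm x^B_i + (\bm r \bm x^B_i) \bm e + \frac{\bm r \bm e}{\lambda^B_i - \bm r \bm e}(\bm r \bm x^B_i) \bm e \nonumber \\
&=& \lambda^B_i \left( \bm x^B_i + \frac{\bm r \bm x^B_i}{\lambda^B_i - \bm r \bm e} \bm e \right).
\end{eqnarray}
This is the same manipulation as in the proof of Theorem~\ref{theorem1}, with the term $\bm B \bm e = \bm 0$ replacing $(\bm I - \bm P)\bm e = \bm 0$ there.

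I do not expect any genuine obstacle: the result is essentially a corollary of Lemma~\ref{lemma1}. The only points that demand care are the condition $\lambda^B_i \neq \bm r \bm e$, which is precisely what keeps the denominator nonzero and the displayed eigenvector well defined, and the appeal to the simplicity of the zero eigenvalue of $\bm B$ from Theorem~\ref{theorem5}, without which Lemma~\ref{lemma1} would not apply. Everything else is the routine verification above.
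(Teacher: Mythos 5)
Your proposal is correct and takes exactly the route the paper itself takes: the paper's proof of Theorem~\ref{theorem6} merely states that it is ``similar to that of Theorem~\ref{theorem1}'' and omits the verification, which is precisely the application of Lemma~\ref{lemma1} (with $\bm A = \bm B$, $\lambda = \lambda^B_1 = 0$, $\bm v = \bm e$, using the simplicity of the zero eigenvalue from Theorem~\ref{theorem5}) together with the direct column-eigenvector computation that you carry out. Your expansion, collecting the coefficient of $(\bm r \bm x^B_i)\bm e$ over the denominator $\lambda^B_i - \bm r\bm e$, checks out, so you have simply supplied the details the paper leaves to the reader.
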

\begin{proof}
The proof of this theorem is similar to that of
Theorem~\ref{theorem1}. We only need to verify whether the
eigenvalue and eigenvector satisfy the equation $\bm D \bm x^D_i =
\lambda^D_i \bm x^D_i$. Such verification is easy and we ignore the
details for simplicity.
\end{proof}

\begin{theorem}\label{theorem7}
For any row vector $\bm r$ satisfying $\bm r \bm e \neq 0$, the
matrix $\bm D := \bm B + \bm e \bm r$ is invertible, where $\bm B$
is the transition rate matrix of an ergodic Markov process.
\end{theorem}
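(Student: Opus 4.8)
The plan is to mirror the argument of Theorem~\ref{theorem2}, since Theorem~\ref{theorem7} is simply its continuous-time counterpart. A square matrix is invertible precisely when $0$ is not among its eigenvalues, so the strategy is to enumerate the entire spectrum of $\bm D = \bm B + \bm e \bm r$ and then verify that none of the eigenvalues vanishes.

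First I would invoke Theorem~\ref{theorem6} to write down the spectrum of $\bm D$ explicitly: one eigenvalue is $\lambda^D_1 = \bm r \bm e$ with column eigenvector $\bm e$, and the remaining eigenvalues are $\lambda^D_i = \lambda^B_i$ for $i \neq 1$. The eigenvalue $\bm r \bm e$ is nonzero by the hypothesis $\bm r \bm e \neq 0$, so the entire problem reduces to showing that $\lambda^B_i \neq 0$ for each $i \neq 1$.

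The key step is to extract this non-vanishing directly from Theorem~\ref{theorem5}. That theorem provides the bound $|\lambda^B_i + \gamma| < \gamma$ for $i \neq 1$, which states that each such eigenvalue lies strictly inside the disc of radius $\gamma$ centered at $-\gamma$. Since this disc passes through the origin, the strict inequality forces $\lambda^B_i \neq 0$: were $\lambda^B_i = 0$, we would obtain $|\lambda^B_i + \gamma| = \gamma$, contradicting strictness. Hence every $\lambda^B_i$ with $i \neq 1$ is nonzero as well.

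Combining these observations, no eigenvalue of $\bm D$ equals $0$, so $\bm D$ is invertible and the theorem follows. I do not anticipate a genuine obstacle here; the only point deserving care is the geometric remark that the open disc $|z + \gamma| < \gamma$ excludes the origin, which is exactly the feature of the continuous-time spectrum playing the role that the bound $|\lambda_i| < 1$ played in the discrete-time proof of Theorem~\ref{theorem2}.
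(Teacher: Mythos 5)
Your proposal is correct and follows essentially the same route as the paper: the paper's proof likewise combines Theorem~\ref{theorem5} and Theorem~\ref{theorem6} to list the eigenvalues of $\bm D$ as $\bm r \bm e$ together with the $\lambda^B_i$ satisfying $|\lambda^B_i + \gamma| < \gamma$ for $i \neq 1$, and concludes that $0$ is not an eigenvalue. Your explicit remark that the open disc $|z+\gamma|<\gamma$ excludes the origin simply spells out the step the paper leaves as ``easily verified.''
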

\begin{proof}
Based on Theorem~\ref{theorem5} and Theorem~\ref{theorem6}, we can
easily verify that the eigenvalues of $\bm D$ satisfy:
\begin{equation}
\left\{
\begin{array}{ll}
\lambda^D_i = \bm r \bm e, &i=1;\\
|\lambda^D_i+\gamma| < \gamma, &i \neq 1;\\
\end{array}
\right.
\end{equation}
Therefore, $0$ is not the eigenvalue of the matrix $\bm D$ and $\bm
D$ is invertible.
\end{proof}

With Theorem~\ref{theorem7}, we can also simplify the computation of
the steady state distribution and the performance potential (value
function) of Markov processes.

First, we discuss how to compute the steady state distribution $\bm
\pi$ in Markov processes. It is known that $\bm \pi$ can be
determined by the following equations
\begin{equation}
\begin{array}{ll}
\bm \pi \bm B = \bm 0 \\
\bm \pi \bm e = 1\\
\end{array}
\end{equation}
Multiplying $r(i)$ on both sides of the second equation and adding
it to the $i$th row of the first equation, we can obtain the
following equation after proper manipulations
\begin{equation}
\bm \pi(\bm B + \bm e \bm r) = \bm r.
\end{equation}
With Theorem~\ref{theorem7}, for any $S$-dimensional row vector $\bm
r$ satisfying $\bm r \bm e \neq 0$, $(\bm B + \bm e \bm r)$ is
invertible and we have
\begin{center}
\begin{boxedminipage}{1\columnwidth}
\begin{equation}\label{eq_piQ}
\bm \pi = \bm r (\bm B + \bm e \bm r)^{-1}, \qquad \bm r \bm e \neq
0.
\end{equation}
\vspace{-10pt}
\end{boxedminipage}
\end{center}

Second, we discuss how to compute the performance potential of
Markov processes. In a continuous time Markov process, the
performance potential $\bm g$ is defined as below.
\begin{equation}
g(i) = \lim\limits_{T \rightarrow \infty}E\left\{ \int^{T}_{0}
[f(X_t) - \eta]dt \Big| X_0=i \right\}.
\end{equation}
We also have the Poisson equation for the above definition
\begin{equation}
-\bm B \bm g = \bm f - \eta \bm e.
\end{equation}
In the literature, it is widely adopted that $\bm g$ can be
numerically computed by the following equation \cite{Cao97,Cao07}
\begin{equation}\label{eq37}
\bm g = -(\bm B - \bm e \bm \pi)^{-1}\bm f.
\end{equation}
The above equation includes the condition $\bm \pi \bm g = \eta$.
Similar to the case of discrete time Markov chain, we can also
derive
\begin{equation}
(\bm B + \bm e \bm r)\bm g = -\bm f,
\end{equation}
where the condition $\bm r \bm g = \eta$ is required. With
Theorem~\ref{theorem7}, for any $S$-dimensional row vector $\bm r$
satisfying $\bm r \bm e \neq 0$, $(\bm B + \bm e \bm r)$ is
invertible and we have
\begin{center}
\begin{boxedminipage}{1\columnwidth}
\begin{equation}\label{eq_gQ}
\bm g = -(\bm B + \bm e \bm r)^{-1}\bm f, \qquad \bm r \bm e \neq 0.
\end{equation}
\vspace{-10pt}
\end{boxedminipage}
\end{center}
Therefore, we can directly compute the value of $\bm g$ using the
above equation, without the pre-computation of $\bm \pi$ required by
(\ref{eq37}).

On the other hand, we can also develop an online least-squares
algorithm to estimate $\bm g$ based on sample paths, which is
similar to the algorithm described in Fig.~\ref{fig_algo} for the
case of discrete time Markov chains. For simplicity, we ignore the
details.

\subsection{Poisson Equation for Q-factors}\label{subsection_Q}
It is well known that we have Poisson equation for the performance
potential or the value function in MDPs. Similar to the performance
potential quantifying the effect of the initial state on the average
performance, the Q-factor is an important quantity in reinforcement
learning and it quantifies the effect of the state-action pair on
the system average performance. Below, we give a Poisson equation
for Q-factors in an MDP under the time average criterion.

Suppose the current policy is $\mathcal L$. In this subsection, all
the quantities of MDPs are assumed for the policy $\mathcal L$ by
default, unless we have specific other notations. The Q-factor of
the Markov system under this policy $\mathcal L$ is defined as
below.
\begin{equation}\label{eq_Q1}
Q^{\mathcal L}(s,a) := f(s,a) - \eta^{\mathcal L} + \sum_{s' \in
\mathcal S} p(s,a,s')g^{\mathcal L}(s'),
\end{equation}
where $p(s,a,s')$ is the probability of which the system transits
from the current state $s$ to the next state $s'$ if the action $a$
is adopted.

For a randomized policy, $\mathcal L$ is a mapping $\mathcal L:
\mathcal S \rightarrow \Omega(\mathcal A)$, where $\Omega(\mathcal
A)$ is the set of probability measurements over the action space
$\mathcal A$. That is, $\mathcal L(s,a)$ is the probability of which
action $a$ is adopted at state $s$, $s \in \mathcal S$ and $a \in
\mathcal A$. For the deterministic case, $\mathcal L$ is a mapping
$\mathcal L: \mathcal S \rightarrow \mathcal A$, i.e., $\mathcal
L(s)$ indicates the action adopted at state $s$. We can rewrite
(\ref{eq_Q1}) in a recursive form as below.
\begin{equation}\label{eq_QPoisson}
Q^{\mathcal L}(s,a) = f(s,a) - \eta^{\mathcal L} + \sum_{s' \in
\mathcal S} p(s,a,s') \sum_{a' \in \mathcal A}\mathcal
L(s',a')Q^{\mathcal L}(s',a').
\end{equation}
The above equation is a fixed-point equation for Q-factors, which is
similar to the Poisson equation for the performance potential or the
value function in the classical MDP theory.

By sorting the element $Q^{\mathcal L}(s,a)$ in a vector form, we
define an $SA$-dimensional column vector $\bm Q^{\mathcal L}$ as
below.
\begin{equation}
\bm Q^{\mathcal L}(s) := \left[
\begin{array}{c}
Q^{\mathcal L}(s,a_1)\\
Q^{\mathcal L}(s,a_2)\\
\vdots\\
Q^{\mathcal L}(s,a_A)
\end{array}
\right], \ \forall s \in \mathcal S. \qquad \bm Q^{\mathcal L} :=
\left[
\begin{array}{c}
\bm Q^{\mathcal L}(1)\\
\bm Q^{\mathcal L}(2)\\
\vdots\\
\bm Q^{\mathcal L}(S)
\end{array}
\right].
\end{equation}

With the same order to sort the element $s,a,s',a'$, we can rewrite
(\ref{eq_QPoisson}) in a matrix form as below.
\begin{equation}\label{eq_QPoisson2}
\bm Q^{\mathcal L} = \bm f - \eta^{\mathcal L} \bm e + \bm P \bm L
\bm Q^{\mathcal L},
\end{equation}
where $\bm Q^{\mathcal L}$ and $\bm r$ are column vectors with size
$SA$, $\bm e$ is a column vector of ones with a proper size (here
the size is $SA$), $\bm P$ is a stochastic matrix with size $SA
\times S$
\begin{equation}
\bm P := \Big [\bm P((s,a),s') \Big]_{SA \times S}, \quad \bm
P((s,a),s')=p(s,a,s'), \ \forall s,s' \in \mathcal S, \ a \in
\mathcal A,
\end{equation}
$\bm L$ is a stochastic matrix with size $S \times SA$
\begin{equation}
\bm L := \Big [\bm L(s',(s',a')) \Big]_{S \times SA}, \quad \bm
L(s',(s',a'))=\mathcal L(s',a'), \ \forall s' \in \mathcal S, \ a'
\in \mathcal A,
\end{equation}
where $\bm L(s,(s',a'))=0$ if $s \neq s'$. Therefore, most of the
elements of $\bm L$ is 0 and $\bm L$ is a sparse matrix like below
\begin{equation}
\bm L = \left[
\begin{array}{ccccc}
\mathcal L(1,:) & \bm 0 & \bm 0 & \cdots & \bm 0\\
\bm 0 & \mathcal L(2,:) & \bm 0 & \cdots & \bm 0\\
\vdots & \vdots & \vdots & \ddots & \vdots\\
\bm 0 & \bm 0 & \bm 0 & \cdots & \mathcal L(S,:)
\end{array}
\right]_{S \times SA}.
\end{equation}
If we write $\bm {\mathcal L}$ as an $S \times A$ matrix as below.
\begin{equation}
\bm {\mathcal L} := \left[
\begin{array}{cccc}
\mathcal L(1,a_1),&\mathcal L(1,a_2),&\cdots,&\mathcal L(1,a_A)\\
\mathcal L(2,a_1),&\mathcal L(2,a_2),&\cdots,&\mathcal L(2,a_A)\\
\vdots & \vdots & \ddots & \vdots\\
\mathcal L(S,a_1),&\mathcal L(S,a_2),&\cdots,&\mathcal L(S,a_A)\\
\end{array}
\right]_{S \times A}.
\end{equation}
Then matrix $\bm L$ equals the block diagonal of \emph{Kronecker
product} (matrix form of tensor-product) of vector $\bm e^T$ and
matrix $\bm {\mathcal L}$, where $\bm e^T$ is an $S$-dimensional row
vector of ones. That is, we have
\begin{equation}
\bm L = \diag (\bm e^T \otimes \bm {\mathcal L})  = \left[
\begin{array}{cccc}
\bm {\mathcal L}(1,:),&\bm 0,&\cdots,&\bm 0\\
\bm 0,&\bm {\mathcal L}(2,:),&\cdots,&\bm 0\\
\vdots & \vdots & \ddots & \vdots\\
\bm 0,&\bm 0,&\cdots,&\bm {\mathcal L}(S,:)\\
\end{array}
\right]_{S \times SA}.
\end{equation}
It is easy to verify that
\begin{equation}
\begin{array}{l}
\bm L \bm e = \bm e. \\
\bm P \bm e = \bm e.
\end{array}
\end{equation}
Therefore,
\begin{equation}
\bm P \bm L \bm e = \bm e,
\end{equation}
and $\bm P \bm L$ is still a stochastic matrix that can be denoted
as matrix $\bm {\tilde{P}}$ with size $SA \times SA$. That is,
\begin{equation}
\bm {\tilde{P}} := \bm P \bm L.
\end{equation}
Therefore, we obtain a linear form of (\ref{eq_QPoisson2}) as below.
\begin{center}
\begin{boxedminipage}{1\columnwidth}
\begin{equation}\label{eq_QPoisson3}
(\bm I - \bm {\tilde{P}} )\bm Q^{\mathcal L} = \bm f -
\eta^{\mathcal L} \bm e.
\end{equation}
\vspace{-10pt}
\end{boxedminipage}
\end{center}

We can see that for any solution of $\bm Q^{\mathcal L}$ satisfying
(\ref{eq_QPoisson3}), $\bm Q^{\mathcal L} + c \bm e$ is still a
solution to (\ref{eq_QPoisson3}), where $c$ is any constant.
Therefore, we can let $\bm Q^{\mathcal L}$ satisfy
\begin{equation}
\bm r \bm Q^{\mathcal L} = \eta,
\end{equation}
where $\bm r$ is any $SA$-dimensional row vector satisfying $\bm r
\bm e \neq 0$. Substituting the above equation into
(\ref{eq_QPoisson3}), we obtain
\begin{equation}\label{eq_QPoisson4}
(\bm I - \bm {\tilde{P}}  + \bm e \bm r)\bm Q^{\mathcal L} = \bm f.
\end{equation}

Since $\bm {\tilde{P}} $ is a stochastic matrix, with
Theorem~\ref{theorem2}, we know that $(\bm I - \bm {\tilde{P}} + \bm
e \bm r)$ is invertible. Therefore, we have the following solution
of Q-factors
\begin{center}
\begin{boxedminipage}{1\columnwidth}
\begin{equation}\label{eq_QPoisson5}
\bm Q^{\mathcal L} = (\bm I - \bm {\tilde{P}}  + \bm e \bm r)^{-1}
\bm f, \qquad \bm r \bm e \neq 0.
\end{equation}
\vspace{-10pt}
\end{boxedminipage}
\end{center}

Therefore, we obtain the Poisson equation (\ref{eq_QPoisson3}) for
Q-factors, which is a fixed point equation to solve Q-factors. The
closed-form solution of Q-factors is also obtained in
(\ref{eq_QPoisson5}). Based on these equations, we may also develop
numerical computation algorithms or online estimation algorithms for
Q-factors, similar to the discussion and algorithms in
Subsection~\ref{subsection_Alg}. Recently, the deep reinforcement
learning, such as the AlphaGo of Google, is becoming a promising
direction of artificial intelligence \cite{Silver16} and the
Q-factors are fundamental quantities in reinforcement learning
\cite{Sutton98}, how to efficiently compute, estimate or even
represent the Q-factors is an interesting topic that deserves
further research efforts.

\section{Conclusion} \label{section_Conclusion}
In this paper, we study a generalized fundamental matrix in Markov
systems. Different from the fundamental matrix in the classical MDP
theory, the generalized fundamental matrix does not require the
pre-computation of $\bm \pi$ and it can provide a more concise form
for the computation of some fundamental quantities in Markov
systems. Based on the generalized fundamental matrix, we give a
closed-form solution to the Poisson equation and represent the
values of performance potentials, steady state distribution, and
Q-factors of Markov systems. The new representation of these
solutions may shed some light on efficiently computing or estimating
these fundamental quantities from a new perspective, which is very
important for the performance optimization of Markov decision
processes.

\section*{Acknowledgment}
The first author was supported in part by the National Natural
Science Foundation of China (61573206, 61203039, U1301254) and would
like to thank X. R. Cao, J. J. Hunter, C. D. Meyer, and M. L.
Puterman for their helpful discussions and comments.

\end{document}